\def\AA{{\mathbb A}}
\def\FF{{\mathbb F}}
\def\GG{{\mathbb G}}
\def\QQ{{\mathbb Q}}
\def\PP{{\mathbb P}}
\def\QQ{{\mathbb Q}}
\def\ZZ{{\mathbb Z}}
\def\0{{\mathbf 0}}
\def\1{{\mathbf 1}}
\def\abf{{\mathbf a}}
\def\bbf{{\mathbf b}}
\def\Acal{{\mathcal A}}
\def\Fcal{{\mathcal F}}
\def\Gcal{{\mathcal G}}
\def\Mcal{{\mathcal M}}
\def\Ocal{{\mathcal O}}
\def\Vcal{{\mathcal V}}
\def\Kbar{{\bar K}}
\def\Rat{\mathrm{Rat}}
\def\Spec{\mathrm{Spec}}
\def\PGL{\mathrm{PGL}}
\def\GL{\mathrm{GL}}
\def\Fix{\mathrm{Fix}}
\def\Res{\mathrm{Res}}
\def\uf{\mathrm{uf}}
\def\uc{\mathrm{uc}}
\def\Ratfd2{\mathrm{Rat}_{d,2}^{\uf}}
\def\Mfd2{\mathcal{M}_{d,2}^{\uf}}
\def\Rfd2{\mathcal{R}_{d,2}^{\uf}}
\def\Rcd2{\mathcal{R}_{d,2}^{\uc}}
\theoremstyle{plain}
\newtheorem{thm}{Theorem}
\newtheorem{prop}[thm]{Proposition}
\newtheorem{lem}[thm]{Lemma}
\theoremstyle{definition}
\newtheorem*{dfn}{Definition}
\newtheorem*{dfns}{Definitions}
\newtheorem{rem}{Remark}
\begin{document}

\title[Quadratic rational maps]{On quadratic rational maps with prescribed good reduction}

\author{Clayton Petsche and Brian Stout}

\address{Clayton Petsche; Department of Mathematics; Oregon State University; Corvallis OR 97331 U.S.A.}

\email{petschec@math.oregonstate.edu}

\address{Brian Stout; Ph.D. Program in Mathematics; CUNY Graduate Center; 365 Fifth Avenue; New York, NY 10016-4309 U.S.A.}

\email{bstout@gc.cuny.edu}

\thanks{{\em Date.} August 27, 2012}
\subjclass[2010]{Primary: 37P45; Secondary: 14G25, 37P15}
\keywords{Arithmetic dynamics, moduli spaces of rational maps, good reduction}

\begin{abstract}
Given a number field $K$ and a finite set $S$ of places of $K$, the first main result of this paper shows that the quadratic rational maps $\phi:\PP^1\to\PP^1$ defined over $K$ which have good reduction at all places outside $S$ comprise a Zariski-dense subset of the moduli space $\Mcal_2$ parametrizing all isomorphism classes of quadratic rational maps.  We then consider quadratic rational maps with double unramified fixed-point structure, and our second main result establishes a geometric Shafarevich-type non-Zariski-density result for the set of such maps with good reduction outside $S$.  We also prove a variation of this result for quadratic rational maps with unramified $2$-cycle structure. 
\end{abstract}

\maketitle


\section{Introduction}\label{Introduction}

Let $K$ be a number field, and let $S$ be a finite set of places of $K$ which includes all of the Archimedean places.  In 1963, Shafarevich proved that, up to $K$-isomorphism, there exist only finitely many elliptic curves over $K$ having good reduction at all places $v\not\in S$ (see \cite{silverman:aec} $\S$~IX.6).  He conjectured a generalization of the result to abelian varieties, and this was proved in 1983 by Faltings \cite{MR718935} as part of his proof of Mordell's conjecture.

Motivated by an analogy between elliptic curves and dynamical systems on the projective line, Szpiro and Tucker \cite{MR2435841} have asked whether there is a similar finiteness result for the set $\Rat_d(K)$ of rational maps $\phi:\PP^1\to\PP^1$ of a given degree $d$ defined over $K$.  As they point out, however, if one uses the standard notions of isomorphism and good reduction for rational maps on $\PP^1$, then simple counterexamples preclude a finiteness result of this type.  For example, a rational map defined by a monic integral polynomial has everywhere good reduction, and for each fixed degree $d\geq2$ one can easily find infinite families of such maps which are pairwise non-isomorphic.  

By using a weaker notion of isomorphism defined by separate pre-composition and post-composition actions of $\PGL_2$ on the set of rational maps of degree $d$, and by suitably altering the notion of good reduction, Szpiro-Tucker obtained a finiteness result of Shafarevich type for a certain class of rational maps.  More recently, Petsche \cite{PetscheCSRM} has proved a different Shafarevich-type finiteness theorem along certain families of critically separable rational maps, using a notion of isomorphism defined via $\PGL_2$-conjugation.

In the present paper we consider the following similar but somewhat more geometric question.  Rather than a finiteness statement for the set of isomorphism classes of rational maps of degree $d$ having prescribed good reduction, we ask instead whether or not this set is Zariski-dense in an appropriate moduli space.  We focus here  exclusively on the simplest nontrivial setting of quadratic rational maps (the case $d=2$), and we carry out these investigations in the context of the standard moduli space $\Mcal_2$ of isomorphism classes of such maps.  

Introduced in the complex-analytic setting by Milnor \cite{milnor:quadraticmaps}, and further developed geometrically by Silverman \cite{silverman:modulirationalmaps}, $\Mcal_2$ is an affine surface whose definition arises, via geometric invariant theory, as the quotient 
\begin{equation*}
\Mcal_2=\Rat_2/\PGL_2
\end{equation*}
of the space $\Rat_2$ of all quadratic rational maps $\phi:\PP^1\to\PP^1$, modulo the conjugation action of $\PGL_2$, the automorphism group of $\PP^1$.

The first main result of this paper shows that, under standard definitions of isomorphism and good reduction, not only are there infinitely many non-isomorphic quadratic rational maps over $K$ having good reduction at all places outside $S$, but moreover, these rational maps are ubiquitous enough to comprise a Zariski-dense subset of the moduli space $\Mcal_2$.  The definitions of the required terms are stated precisely in $\S$\ref{QuadRatZarDenseSect}, and this Zariski-density result is stated as Theorem~\ref{ZariskiDenseThm}. 

The remainder of this paper is spent showing that, despite the aforementioned Zariski-density result, if we replace arbitrary quadratic rational maps with objects possessing slightly more dynamical structure, then it is possible to obtain non-Zariski-density results of Shafarevich type for the moduli space $\Mcal_2$.

To explain our motivation in trying to obtain such results, we again consider the analogy with elliptic curves.  It is an interesting fact that Shafarevich's finiteness theorem for $K$-isomorphism classes of elliptic curves may fail, in general, for genus-one curves; see \cite{MR828821} p. 241.  In this setting, the existence of a $K$-rational point acting as the origin for the group law on an elliptic curve has a strong effect on the possible number of $K$-isomorphism classes of such curves.

Motivated by the elliptic curve analogy, we consider quadratic rational maps with double unramified fixed-point structure.  More precisely, we consider the space of triples $\Phi=(\phi,P_1,P_2)$, where $\phi:\PP^1\to\PP^1$ is a quadratic rational map defined over $K$, and where $P_1,P_2\in\PP^1(K)$ are distinct $K$-rational unramified fixed points of $\phi$.  The conjugation action of $\PGL_2$ gives rise to a notion of $K$-isomorphism between two such triples, and we formulate a natural definition of good reduction for such a triple $\Phi=(\phi,P_1,P_2)$ at a non-Archimedean place $v$ of $K$ which, roughly speaking, requires that (up to $K$-isomorphism) the reduction $\tilde\Phi_v=(\tilde\phi_v,\tilde{P}_{1,v},\tilde{P}_{2,v})$ constitutes a quadratic rational map with double unramified fixed-point structure over the residue field $\FF_v$ at $v$.  In $\S$\ref{Rat*} we give the precise definitions of these terms and we prove the second main result of this paper, Theorem~\ref{ZariskiNonDenseThmFixed}, which shows that among all quadratic rational maps with double unramified fixed-point structure, those having good reduction at all places $v$ outside $S$ comprise a non-Zariski-dense subset of the moduli space $\Mcal_2$.  We regard this result as a geometric Safarevich-type theorem for rational maps.

Finally, with very little extra effort we can also establish a variation on the non-Zariski-density result of Theorem~\ref{ZariskiNonDenseThmFixed}, in which maps with unramified $2$-cycle structure take the place of maps with unramified double fixed-point structure.  This result is stated as Theorem~\ref{ZariskiNonDenseThmCycle}.

The first author is supported by NSF grant DMS-0901147.  The second author would like to thank Lucien Szpiro for his generous support under NSF grant DMS-0739346.


\section{Preliminaries}

\subsection{Review of quadratic rational maps}\label{QuadRatMapsSect}  We now fix notation and review basic facts about quadratic rational maps on the projective line; for further details see \cite{MR2316407}, $\S$2.4, $\S$4.3, $\S$4.6.  

An arbitrary quadratic rational map $\phi:\PP^1\to\PP^1$ defined over $\Kbar$ is given in homogeneous coordinates as 
\begin{equation*}
\phi(X:Y) = (A(X,Y):B(X,Y)),
\end{equation*} 
where
\begin{equation*}
\begin{split}
A(X,Y) & =a_0X^2+a_1XY+a_2Y^2 \\
B(X,Y) & =b_0X^2+b_1XY+b_2Y^2
\end{split}
\end{equation*}
are two binary quadratic forms in $\Kbar[X,Y]$ having no common zeros in $\Kbar^2\setminus\{(0,0)\}$.  The requirement that $A(X,Y)$ and $B(X,Y)$ share no common zeros in $\Kbar^2\setminus\{(0,0)\}$ is equivalent to the nonvanishing of the resultant
\begin{equation*} 
\Res(A,B) = \left| \begin{array}{cccc}
a_0 & a_1 & a_2 & 0 \\
0 & a_0 & a_1 & a_2 \\
b_0 & b_1 & b_2 & 0 \\
0 & b_0 & b_1 & b_2 
\end{array} \right|
\end{equation*}
associated to the pair $(A,B)$.  

The group variety of automorphisms of $\PP^1$ is denoted by $\PGL_2$, and each $f\in\PGL_2$ is given in homogeneous coordinates by 
\begin{equation*}
f(X:Y)=(\alpha X+\beta Y:\gamma X+\delta Y)
\end{equation*}
for some nonsingular matrix $\left(\begin{smallmatrix} \alpha&\beta \\ \gamma& \delta  \end{smallmatrix}\right)$ with coefficients in $\Kbar$.  Given a quadratic rational map $\phi:\PP^1\to\PP^1$, we denote by $\phi^f:\PP^1\to\PP^1$ the rational map $\phi^f=f^{-1}\circ\phi\circ f$ defined via conjugation of $\phi$ by $f$.  Explicitly, if we denote by $(A,B):\Kbar^2\to\Kbar^2$ the map defined by $(X,Y)\mapsto(A(X,Y),B(X,Y))$, and if we define binary quadratic forms $C(X,Y)$ and $D(X,Y)$ in $\Kbar[X,Y]$ by the formula
\begin{equation*}
(C,D) = \left(\begin{matrix} \alpha&\beta \\ \gamma& \delta  \end{matrix}\right)^{-1}\circ(A,B)\circ\left(\begin{matrix} \alpha&\beta \\ \gamma& \delta   \end{matrix}\right),
\end{equation*}
then $\phi^f(X:Y) = (C(X,Y):D(X,Y))$.  The formula 
\begin{equation}\label{ResChangeForm}
\Res(C,D) = (\alpha\delta-\beta\gamma)^{2}\Res(A,B),
\end{equation}
which shows the effect of $\GL_2$-conjugation on the resultant, can be verified from direct calulation.

\subsection{Review of the moduli space $\Mcal_2$}\label{ModuliSpaceSect}  The moduli space $\Mcal_2$ parametrizing isomorphism classes of quadratic rational maps was first studied complex analytically by Milnor \cite{milnor:quadraticmaps}, who showed that it is isomorphic to the affine plane $\AA^2$.  A bit later, Silverman \cite{silverman:modulirationalmaps} used geometric invariant theory to construct $\Mcal_2$ (and more generally the moduli space $\Mcal_d$ of rational maps of degree $d$) as a scheme over $\Spec(\ZZ)$, and established the isomorphism $\Mcal_2\simeq\AA^2$ in this more geometric context.  Since that time, variations and generalizations have been studied by Petsche-Szpiro-Tepper \cite{MR2567424}, Levy \cite{MR2741188}, and others.  Further references include \cite{MR2316407} $\S$4.4 and \cite{MR2884382}.

We now review the definition and basic properties of the space $\Mcal_2$.  The first step is to observe that the set of all quadratic rational maps $\phi:\PP^1\to\PP^1$ defined over $\Kbar$ is parametrized by an affine variety which is commonly denoted by $\Rat_2$.  To obtain this variety, note that since the map $\phi$ is unchanged by scaling its coefficients, one may identify $\phi$ with the point $(\abf:\bbf)=(a_0:a_1:a_2:b_0:b_1:b_2)$ of $\PP^5$ defined by its coefficients.  In this way, the space $\Rat_2$ of all quadratic rational maps $\phi:\PP^1\to\PP^1$ is identified with the open affine subvariety $\{(\abf:\bbf)\mid\Res(A,B)\neq0\}$ of $\PP^5$.

Let
\begin{equation}\label{PGLAction}
\begin{split}
\PGL_2\times\Rat_2 & \to \Rat_2 \\
(f,\phi) & \mapsto \phi^f
\end{split}
\end{equation}
be the conjugation action of $\PGL_2$ on $\Rat_2$.  The moduli space $\Mcal_2$ is defined to be the affine variety $\Spec(\Acal^{\PGL_2})$, where $\Acal^{\PGL_2}$ is the subring of $\PGL_2$-invariants in the coordinate ring $\Acal=\Gamma(\Rat_2,\Ocal_{\Rat_2})$.  Using standard facts from geometric invariant theory it can be shown that $\Mcal_2$ is a geometric quotient for the action $(\ref{PGLAction})$, which means roughly that the map
\begin{equation}\label{M2QuotientMap}
\langle\cdot\rangle:\Rat_2\to\Mcal_2
\end{equation}
induced by inclusion $\Acal^{\PGL_2}\subset\Acal$ possesses many of the nice properties one would expect from the quotient map of a group action in the classical sense.  For example, the (geometric) fibers of the map $(\ref{M2QuotientMap})$ are closed, and they are precisely the orbits in $\Rat_2$ with respect to the conjugation action of $\PGL_2$. 

To describe Milnor's isomorphism $\Mcal_2\simeq\AA^2$ in detail, recall that each quadratic rational map $\phi:\PP^1\to\PP^1$ has three (counting with multiplicity) fixed points $\alpha_1,\alpha_2,\alpha_3$ in $\PP^1$, and for each fixed point, the multiplier $\lambda_j$ associated to $\alpha_j$ is the leading coefficient $\lambda_j=\phi'(\alpha_j)$ of the power series expansion of $\phi(z)$ at $z=\alpha_j$ (with respect to some choice of affine coordinate $z$ on $\PP^1$).  A standard calculation shows that the multiplier of a fixed point is invariant under $\PGL_2$-conjugation.  Since the fixed-point set $\Fix(\phi)$ is naturally an unordered triple, we obtain two scalar-valued $\PGL_2$-invariant functions $\sigma_1$ and $\sigma_2$ on $\Rat_2$, defined by the first two symmetric functions $\sigma_1(\phi)=\lambda_1+\lambda_2+\lambda_3$ and $\sigma_2(\phi)=\lambda_1\lambda_2+\lambda_1\lambda_3+\lambda_2\lambda_3$ in the multipliers of the three fixed points of $\phi$.  Milnor's isomorphism $\Mcal_2\simeq\AA^2$ is defined by
\begin{equation}\label{MilnorMap}
\begin{split}
\sigma:\Mcal_2 & \stackrel{\sim}{\to} \AA^2 \\
	\langle\phi\rangle & \mapsto (\sigma_1(\phi),\sigma_2(\phi)).
\end{split}
\end{equation} 
See \cite{milnor:quadraticmaps}.

\subsection{Number-theoretic preliminaries}  We denote by $M_K$ the set of places of the number field $K$.  For each $v\in M_K$, the notation $|\cdot|_v$ refers to any absolute value on $K$ associated to $v$.  If $v$ is non-Archimedean, $\Ocal_v$ is the subring of $v$-integral elements of $K$, $\Mcal_v$ is the unique maximal ideal of $\Ocal_v$, and $\Ocal_v^\times$ is the group of units in $\Ocal_v$.  The notation $x\mapsto\tilde{x}_v$ will denote the reduction map $\Ocal_v\to\FF_v$ onto the residue field $\FF_v=\Ocal_v/\Mcal_v$; we may omit the subscript, writing $\tilde{x}$ instead of $\tilde{x}_v$, if there is no chance for confusion.

The letter $S$ denotes a finite subset of $M_K$ which includes all of the Archimedean places.  The ring of $S$-integers in $K$ is written $\Ocal_S$, and $\Ocal_S^\times$ denotes the group of units in this ring.

Given a quadratic rational map $\phi:\PP^1\to\PP^1$ defined over $K$, and a non-Archimedean place $v$ of $K$, we may scale the coefficients of $A(X,Y)$ and $B(X,Y)$ by a uniformizing parameter at $v$ to obtain all coefficients in $\Ocal_v$, with at least one coefficient in $\Ocal_v^\times$.  Reducing modulo the maximal ideal of $\Ocal_v$, we obtain a reduced rational map $\tilde\phi_v:\PP^1\to\PP^1$ over the residue field $\FF_v$ of $\Ocal_v$, defined by
\begin{equation*}
\tilde\phi_v(X:Y) = (\tilde{A}(X,Y):\tilde{B}(X,Y)).
\end{equation*}
Clearly $\deg(\tilde\phi_v)\leq2$, and it follows from properties of the resultant that $\deg(\tilde\phi_v)=2$ if and only if $\Res(A,B)\in\Ocal_v^\times$.

Given a point $P=(a:b)\in\PP^1(K)$ and a non-Archimedean place $v$ of $K$, we may scale the coordinates $a$ and $b$ by a uniformizing parameter at $v$ to obtain both coordinates in $\Ocal_v$, with at least one of the two in $\Ocal_v^\times$; we obtain a reduced point $\tilde{P}_v=(\tilde{a}_v:\tilde{b}_v)\in\PP^1(\FF_v)$. 


\section{Prescribed good reduction for quadratic rational maps}\label{QuadRatZarDenseSect}

In this section we show that, not only are there infinitely many non-isomorphic quadratic rational maps over $K$ having good reduction at all places outside $S$, but moreover, these rational maps are ubiquitous enough to comprise a Zariski-dense subset of the moduli space $\Mcal_2$.  We first recall the standard definitions of $K$-isomorphism and good reduction.

\begin{dfns}
Two quadratic rational maps $\phi,\psi\in\Rat_2(K)$ are {\em $K$-isomorphic} if $\psi=\phi^f$ for some automorphism $f\in\PGL_2(K)$.  A quadratic rational map $\phi\in\Rat_2(K)$ has {\em good reduction} at a non-Archimedean place $v$ of $K$ if it is $K$-isomorphic to some $\psi\in\Rat_2(K)$ such that $\deg(\tilde\psi_v)=2$.
\end{dfns}

\begin{thm}\label{ZariskiDenseThm}
Let $\Gcal_2(K,S)$ be the set of all points $\langle\phi\rangle$ in $\Mcal_2$ for maps $\phi$ in $\Rat_{2}(K)$ having good reduction at all places $v\in M_K\setminus S$.  Then $\Gcal_2(K,S)$ is Zariski-dense in $\Mcal_2$.
\end{thm}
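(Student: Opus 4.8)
The plan is to exhibit one explicit family of quadratic rational maps over $K$, all having good reduction outside $S$, and to show that its image in $\Mcal_2$ is Zariski-dense. For a matrix $M=\left(\begin{smallmatrix}a&b\\ c&d\end{smallmatrix}\right)$ with entries in $\Kbar$ and $\det M\neq 0$, let $\phi_M$ be the quadratic rational map
\[
\phi_M(X:Y)=(aX^2+bY^2:cX^2+dY^2),
\]
that is, the post-composition of the squaring map $(X:Y)\mapsto(X^2:Y^2)$ by the automorphism of $\PP^1$ attached to $M$. A direct computation with the resultant formula of \S\ref{QuadRatMapsSect} gives $\Res(aX^2+bY^2,\,cX^2+dY^2)=(\det M)^2$. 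Hence if $M\in\SL_2(\Ocal_S)$ then $\phi_M\in\Rat_2(K)$ has $\Ocal_v$-integral coefficients, not all in the maximal ideal (since $\det M$ is a $v$-unit), and unit resultant at every $v\in M_K\setminus S$, so $\deg(\tilde\phi_{M,v})=2$ for all such $v$. Thus $\phi_M$ has good reduction at every place outside $S$, and $\langle\phi_M\rangle\in\Gcal_2(K,S)$.

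The key step is to show that the morphism $\Theta\colon\SL_2\to\Mcal_2$, $M\mapsto\langle\phi_M\rangle$, is dominant. I would argue this using the fact that a quadratic rational map always has exactly two distinct critical points: by Riemann--Hurwitz the critical divisor has degree $2$, and since ramification indices are bounded by $\deg\phi=2$, this divisor is a sum of two distinct reduced points. A degree-$2$ map is totally ramified over the images of its two critical points, and those images are distinct; conjugating by an automorphism carrying the two critical points to $0$ and $\infty$ thus shows that every class in $\Mcal_2(\Kbar)$ has a representative whose critical points are $0$ and $\infty$, and one checks directly that such representatives are precisely the maps $\phi_M$. Rescaling the relevant $M$ into $\SL_2(\Kbar)$, we conclude that $\Theta$ is surjective on $\Kbar$-points, hence dominant. (Alternatively, one computes that the multiplier of $\phi_M$ at a fixed point $\alpha$ equals $2\alpha(\det M)/(c\alpha^2+d)^2$, expresses $\sigma_1(\phi_M)$ and $\sigma_2(\phi_M)$ as explicit rational functions of $a,b,c,d$ via symmetric functions of the three fixed points, and checks that the differential of $(\sigma_1,\sigma_2)$ has rank $2$ at a single convenient matrix such as $\left(\begin{smallmatrix}1&1\\1&0\end{smallmatrix}\right)$, which already satisfies $\sigma_1\neq 2$.)

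To finish, I would use that $\SL_2(\Ocal_S)$ is Zariski-dense in $\SL_2$, since it contains $\SL_2(\ZZ)$, which is a Zariski-dense subgroup of $\SL_2$. Combining density with dominance: if $Z\subseteq\Mcal_2$ is a closed subset containing $\Gcal_2(K,S)$, then $\Theta^{-1}(Z)$ is a closed subset of $\SL_2$ containing $\SL_2(\Ocal_S)$, hence equals $\SL_2$; so $Z$ contains the dense subset $\Theta(\SL_2)$ and therefore $Z=\Mcal_2$. This shows $\Gcal_2(K,S)$ is Zariski-dense in $\Mcal_2$.

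The only genuinely nontrivial point is the dominance of $\Theta$. The critical-point argument is conceptually clean but requires some care in the degenerate sub-cases where a critical point of $\phi$ lies at $\infty$; the computational alternative is routine but involves a modest amount of symmetric-function bookkeeping to write $\sigma_1$ and $\sigma_2$ in terms of the matrix entries.
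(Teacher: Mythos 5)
Your proof is correct, and while it uses the same good-reduction family as the paper's primary proof --- critical-point normal form $(aX^2+bY^2:cX^2+dY^2)$ with unit determinant, whose resultant $(ad-bc)^2$ makes good reduction outside $S$ immediate --- your density argument is genuinely different. The paper's primary proof invokes Milnor's explicit formulas for $\sigma_1,\sigma_2$ in terms of the invariants $A=\frac{ad}{ad-bc}$ and $\Sigma=\frac{a^3b+cd^3}{(ad-bc)^2}$ and runs a Bezout-type count: for each $N$ it exhibits $N$ integral unimodular maps whose images lie on one vertical line $\{\sigma_1=8p^{2N}-6\}$ with pairwise distinct $\sigma_2$-coordinates, contradicting any uniform bound on the intersection of a line with a proper closed subset. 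You instead prove the parametrization $\Theta:\SL_2\to\Mcal_2$ is dominant geometrically (via Riemann--Hurwitz the critical divisor is two distinct reduced points, the two critical values are automatically distinct since otherwise a fiber would have degree $4$, and a quadratic map totally ramified over $0$ and $\infty$ is $z\mapsto cz^2$ post-composed with an automorphism --- this classification is sound) and then pull a hypothetical closed set containing $\Gcal_2(K,S)$ back along $\Theta$, using that $\SL_2(\ZZ)\subseteq\SL_2(\Ocal_S)$ is Zariski-dense in $\SL_2$ (its closure is a closed subgroup containing both one-parameter unipotent subgroups). Structurally this mirrors the paper's \emph{alternate} proof, which uses a dominant map from $\GG_m\times\GG_m$ and the density of $\Ocal_S^\times\times\Ocal_S^\times$; but that proof fails when $\Ocal_S^\times$ is finite, whereas $\SL_2(\ZZ)$ is always dense, so your argument works unconditionally and in effect unifies the paper's two proofs. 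What the paper's computation buys in exchange is complete explicitness: it needs only Milnor's formulas and elementary counting, with no appeal to density of an arithmetic subgroup or to the dominance of a parametrizing morphism.
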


Our primary proof of this theorem uses the isomorphism $(\ref{MilnorMap})$, as well as a further result of Milnor \cite{milnor:quadraticmaps} on quadratic rational maps in critical-point normal form.  We also give an alternate proof which holds only when the group of $S$-units in $K$ is infinite (thus, this alternate proof only fails to apply when $K$ is either $\QQ$ or a quadratic imaginary extension of $\QQ$, and $S$ consists of the sole Archimedean place of $K$).  While it does not apply in full generality, this secondary proof is sufficiently different from the first proof that it may be of some interest.  It is more self-contained, in that it does not rely on special properties of quadratic rational maps in critical-point normal form, and the ideas behind this secondary proof may find wider applicability toward possible generalizations to the higher degree case.

\begin{proof}[Proof of Theorem~\ref{ZariskiDenseThm}]  
We will consider maps $\phi\in\Rat_2$ given in critical-point normal form
\begin{equation}\label{CPNF}
\phi(X:Y)=(aX^2+bY^2:cX^2+dY^2).
\end{equation}  
The critical points are $(1:0)$ and $(0:1)$, and 
\begin{equation}\label{CPNFRes}
\Res(aX^2+bY^2,cX^2+dY^2)=(ad-bc)^2.
\end{equation}  
Milnor shows in \cite{milnor:quadraticmaps} Corollary C.4 that the two expressions $A=\frac{ad}{ad-bc}$ and $\Sigma=\frac{a^3b+cd^3}{(ad-bc)^2}$ are $\PGL_2$-invariant, and that these quantities are related to the coordinates of the isomorphism $(\ref{MilnorMap})$ by the identities
\begin{equation*}
\begin{split}
\sigma_1 & = 8A-6 \\
\sigma_2 & = 8A^2-20A +4\Sigma+12.
\end{split}
\end{equation*}

Let $\Fcal$ be the image in $\Mcal_2$ of the set of all $\phi\in\Rat_2(\QQ)$ given in critical-point normal form $(\ref{CPNF})$ with $a,b,c,d\in\ZZ$ and $ad-bc=1$.  It follows from $(\ref{CPNFRes})$ that each such map has good reduction at all non-Archimedean places $v\in M_K$; thus $\Fcal\subset\Gcal_2(K,S)$, and we are reduced to showing that $\Fcal$ is Zariski dense in $\Mcal_2$.  

In view of the isomorphism $(\ref{MilnorMap})$, we may identify $\Mcal_2$ with the affine plane $\AA^2$ and we may use $\sigma_1$ and $\sigma_2$ as the two affine coordinates on $\Mcal_2$.  Arguing by contradiction, assume on the contrary that the Zariski closure $\overline\Fcal$ of $\Fcal$ is not all of $\Mcal_2$.  Then $\overline\Fcal$ is a finite union of curves and points in $\Mcal_2$.  By Bezout's theorem, there exists a positive bound $B=B(\overline\Fcal)>0$ such that, if $L$ is any line in $\Mcal_2$, then either $L\subseteq\overline\Fcal$ or $|L\cap\overline\Fcal|\leq B$.  For each $\alpha\in\Kbar$, let $L_\alpha$ be the vertical line $\{\sigma_1=\alpha\}$ in $\Mcal_2$.  Then $\overline\Fcal$ can contain at most finitely many of these lines, call them $L_{\alpha_1},\dots,L_{\alpha_r}$.

We will obtain a contradiction by showing that $\Fcal$ can meet a vertical line $L_\alpha$ at an arbitrarily large number of points, for lines $L_\alpha\not\in\{L_{\alpha_1},\dots,L_{\alpha_r}\}$.  Let $N$ be a positive integer, let $p$ be an arbitrary prime number, and for each $0\leq n\leq N-1$ define
\begin{equation*}
\phi_{n,N}(X:Y)=(p^nX^2+Y^2:(p^{2N}-1)X^2+p^{2N-n}Y^2).
\end{equation*}
Since $(p^n)(p^{2N-n})-(p^{2N}-1)(1)=1$, we have $\phi_{n,N}\in\Fcal$; denote by $\Fcal(N)=\{\langle\phi_{n,N}\rangle\mid 0\leq n\leq N-1\}$.  Using Milnor's calculation of $\sigma_1$ and $\sigma_2$ in terms of $A$ and $\Sigma$, we have
\begin{equation*}
\begin{split}
\sigma_1(\langle\phi_{n,N}\rangle) & = 8p^{2N}-6 \\
\sigma_2(\langle\phi_{n,N}\rangle) & = 8p^{4N}-20p^{2N} +4(p^{3n}+(p^{2N}-1)p^{6N-3n})+12
\end{split}
\end{equation*}
This shows that $\Fcal(N)$ is contained in the line $L_{8p^{2N}-6}$.  Further, note that for fixed $N$, the numbers $p^{3n}+(p^{2N}-1)p^{6N-3n}$ are distinct as $n$ ranges from $0\leq n\leq N-1$ (for example, because the $p$-adic absolute value of $p^{3n}+(p^{2N}-1)p^{6N-3n}$ is $p^{-3n}$).  Therefore, the $\sigma_2$-coordinates of the $N$ points $\langle\phi_{n,N}\rangle$ are distinct for $0\leq n\leq N-1$, whereby $|\Fcal(N)|=N$.  We have shown that $\Fcal$ meets the line $L_{8p^{2N}-6}$ in at least $N$ points; taking $N$ large enough produces a contradiction, since there are only finitely many $N$ for which $N\leq B$ or  $L_{8p^{2N}-6}\in\{L_{\alpha_1},\dots,L_{\alpha_r}\}$.
\end{proof}

\begin{proof}[Alternate proof of Theorem~\ref{ZariskiDenseThm}]  (This proof only holds under the additional assumption that the $S$-unit group $\Ocal_S^\times$ of $K$ is infinite).

We will consider maps $\phi\in\Rat_2$ given in fixed-point normal form
\begin{equation}\label{FPNF}
\phi(X:Y)=(X^2+\lambda_1XY:\lambda_2XY+Y^2).
\end{equation}  
For rational maps in this form, the fixed points, their multipliers, and the resultant are particularly easy to calculate.  The fixed points of $\phi$ are $(0:1)$, $(1:0)$, and $(1-\lambda_1:1-\lambda_2)$, with multipliers $\lambda_1, \lambda_2$, and $\lambda_3=\frac{2-\lambda_1-\lambda_2}{1-\lambda_1\lambda_2}$, respectively (see \cite{MR2316407} $\S$4.6), and
\begin{equation*}
\Res(X^2+\lambda_1XY,\lambda_2XY+Y^2)=1-\lambda_1\lambda_2.
\end{equation*}

For each pair of nonzero elements $\alpha,\beta\in \Kbar^\times$, define $\phi_{\alpha,\beta}\in\Rat_2$ to be the map given in fixed-point normal form $(\ref{FPNF})$ with $\lambda_1=\alpha$ and $\lambda_2=\frac{1-\beta}{\alpha}$.  We obtain a map
\begin{equation*}
\begin{split}
u: \GG_m\times\GG_m & \rightarrow\Mcal_2 \\
u(\alpha,\beta) & = \langle\phi_{\alpha,\beta}\rangle.
\end{split}
\end{equation*}

We first show that $u$ is dominant.  For each $\alpha\in \Kbar^\times$, define $u_\alpha:\GG_m\rightarrow\Mcal_2$ by $u_\alpha(z)=u(\alpha,z)$, and let $Z_\alpha=\overline{u_\alpha(\GG_m)}$ be the Zariski-closure of the image of $u_\alpha$.  In view of the isomorphism $(\ref{MilnorMap})$, we may identify $\Mcal_2$ with the affine plane $\AA^2$ and we may use $\sigma_1$ and $\sigma_2$ as the two affine coordinates on $\Mcal_2$.  Direct calculations show that $u_1(z)=(3-z,3-2z)$ and therefore $Z_1$ is the line $2\sigma_1-\sigma_2=3$ in $\Mcal_2$.  Similarly, $u_{-1}(z)=(-3+z+\frac{4}{z},7-2(z+\frac{4}{z}))$ and therefore $Z_{-1}$ is the line $2\sigma_1+\sigma_2=1$ in $\Mcal_2$.  Now let $Z=\overline{u(\Ocal_S^\times \times\Ocal_S^\times)}$ be the Zariski-closure of the image of $u$. Then since the torus $\GG_m\times\GG_m$ is irreducible, $Z$ is irreducible, and since $Z$ contains the two distinct lines $Z_1$ and $Z_{-1}$, $Z$ must have dimension $2$.  Therefore $Z=\Mcal_2$ and $u$ is dominant. 

Define $\Vcal=u(\Ocal_S^\times\times\Ocal_S^\times)$ to be the image in $\Mcal_2$ under $u$ of the set of all $\phi_{\alpha,\beta}\in\Rat_2(K)$ for which both $\alpha$ and $\beta$ are in the $S$-unit group $\Ocal_S^\times$.  The calculation $\Res(X^2+\alpha XY,(\frac{1-\beta}{\alpha})XY+Y^2)=\beta$ shows that each such map has good reduction at all places $v\in M_K\setminus S$, and therefore $\Vcal\subset\Gcal_2(K,S)$.  To complete the proof of the theorem, we only need to show that $\Vcal$ is Zariski-dense in $\Mcal_2$.

Since $\Ocal_S^\times$ is infinite, the subgroup $\Ocal_S^\times \times\Ocal_S^\times$ is Zariski-dense in $\GG_m\times\GG_m$.  Therefore
\begin{equation}\label{ZarClosure}
u(\GG_m\times\GG_m)=u(\overline{\Ocal_S^\times \times\Ocal_S^\times})\subseteq\overline{u(\Ocal_S^\times \times\Ocal_S^\times)}=\overline\Vcal;
\end{equation}
here we have used the fact, which is true of all continuous maps on topological spaces, including morphisms of algebraic varieties, that $f(\overline{X})\subseteq \overline{f(X)}$.  Taking the Zariski-closure of both sides of $(\ref{ZarClosure})$ we obtain $\overline{u(\GG_m\times\GG_m)}\subseteq\overline\Vcal$.  Since $u$ is dominant, we have $\overline{u(\GG_m\times\GG_m)}=\Mcal_2$, and therefore $\overline\Vcal=\Mcal_2$, completing the proof.
\end{proof}


\section{Prescribed good reduction for quadratic rational maps with double unramified fixed-point structure}\label{Rat*}

One of the goals of the paper is to emphasize that dynamical analogues of theorems for elliptic curves over number fields may fail because general rational maps lack the richer structure of elliptic curves.  As mentioned in the introduction, when one replaces elliptic curve in the statement of Shafarevich's theorem with genus-one curve, the theorem becomes false.  In that setting, the extra structure provided by a marked $K$-rational point acting as the origin for the elliptic curve has a dramatic influence on the set of $K$-isomorphism classes of such objects.  With the elliptic curve analogy in mind, in this section we consider rational maps equipped with some additional structure arising from fixed points.

\begin{dfn}
Let $\Rat_{2,2}^{\uf}(K)$ be the set of all triples of the form $\Phi=(\phi,P_1,P_2)$, where $\phi:\PP^1\to\PP^1$ is a quadratic rational map defined over $K$, and where $P_1,P_2\in\PP^1(K)$ are distinct $K$-rational unramified fixed points of $\phi$.  We call such a triple $\Phi$ a \emph{quadratic rational map with double unramified fixed-point structure over $K$}; or, when the context is clear, for brevity we may refer to $\Phi$ simply as a map. 
\end{dfn}

We first remark that the set of quadratic rational maps which fail to have three distinct unramified fixed points forms a proper Zariski-closed subset of $\Mcal_2$.  Further, given a map $\phi\in\Rat_2(K)$, the three fixed points of $\phi$ are $K'$-rational for some extension $K'/K$ of degree at most six.  So from the point of view studying geometric questions concerning generic rational maps, the extra conditions required of a quadratic rational map with double unramified fixed-point structure over $K$ are not terribly restrictive.  (Later we will explain our choice of double, rather than single or triple, unramified fixed-point structure.)  

Given a map $\Phi=(\phi,P_1,P_2)$ in $\Rat_{2,2}^{\uf}(K)$, and an automorphism $f\in\PGL_2(K)$, observe that $f^{-1}(P_1)$ and $f^{-1}(P_2)$ are distinct unramified fixed points of $\phi^f=f^{-1}\circ\phi\circ f$; we may therefore define $\Phi^f\in\Rat_{2,2}^{\uf}(K)$, the conjugate of $\Phi$ with respect to $f$, by
\begin{equation*}
\Phi^f=(\phi^f,f^{-1}(P_1),f^{-1}(P_2)).
\end{equation*}  
This notion of $\PGL_2(K)$-conjugation on $\Rat_{2,2}^{\uf}(K)$ gives rise to the following definitions.

\begin{dfns}
Two maps $\Phi$ and $\Psi$ in $\Rat_{2,2}^{\uf}(K)$ are {\em $K$-isomorphic} if $\Psi=\Phi^f$ for some automorphism $f\in\PGL_2(K)$.  A map $\Phi$ in $\Rat_{2,2}^{\uf}(K)$ has {\em good reduction} at a non-Archimedean place $v$ of $K$ if it is $K$-isomorphic to some $\Psi=(\psi,Q_1,Q_2)$ in $\Rat_{2,2}^{\uf}(K)$ such that $\deg(\tilde\psi_v)=2$ and such that $\tilde{Q}_1$ and $\tilde{Q}_2$ are distinct unramified fixed points of $\tilde\psi_v$.
\end{dfns}

We emphasize that this notion of good reduction for a map $\Phi=(\phi,P_1,P_2)$ in $\Rat_{2,2}^{\uf}(K)$ is slightly stronger than the standard definition of good reduction for its underlying rational map $\phi\in\Rat_2(K)$; a natural additional condition has been added to ensure that reduction modulo the maximal ideal of $\Ocal_v$ preserves the double unramified fixed-point structure of the triple $\tilde\Phi=(\tilde\phi_v,\tilde{P}_{1,v},\tilde{P}_{2,v})$ over the residue field $\FF_v$.

Abusing notation slightly, for each $\Phi=(\phi,P_1,P_2)$ in $\Rat_{2,2}^{\uf}(K)$, define $\langle\Phi\rangle=\langle\phi\rangle$.  Thus, one may view $\langle\cdot\rangle:\Rat_{2,2}^{\uf}(K)\to\Mcal_2(K)$ as the map which forgets the fixed-point structure of $\Phi$ and preserves only the $\PGL_2$-conjugacy class $\langle\phi\rangle$ of its underlying rational map.

The main theorem of this section is the following, which shows that the set of all $\Phi$ in $\Rat_{2,2}^{\uf}(K)$ having good reduction outside $S$ comprises a non-Zariski-dense subset of the moduli space $\Mcal_2$.

\begin{thm}\label{ZariskiNonDenseThmFixed}
Let $\Gcal_{2,2}^{\uf}(K,S)$ be the set of all points $\langle\Phi\rangle$ in $\Mcal_2$ for maps $\Phi$ in $\Rat_{2,2}^{\uf}(K)$ having good reduction at all places $v\in M_K\setminus S$.  Then $\Gcal_{2,2}^\uf(K,S)$ is not Zariski-dense in $\Mcal_2$.
\end{thm}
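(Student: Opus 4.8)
The plan is to use the fixed-point normal form $(\ref{FPNF})$, which is essentially canonical once a pair of distinct $K$-rational unramified fixed points has been marked, and to show that good reduction outside $S$ forces the parameters of this normal form to solve an $S$-unit equation. This will confine $\Gcal_{2,2}^{\uf}(K,S)$ to a finite union of curves in $\Mcal_2\simeq\AA^2$.

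\emph{Step 1 (normalization).} Given $\Phi=(\phi,P_1,P_2)$ in $\Rat_{2,2}^{\uf}(K)$, since $P_1\ne P_2$ lie in $\PP^1(K)$ I would conjugate by the automorphism in $\PGL_2(K)$ sending $(0:1)\mapsto P_1$ and $(1:0)\mapsto P_2$, followed by a diagonal automorphism $z\mapsto\mu z$, to put $\phi$ in fixed-point normal form $(\ref{FPNF})$ with parameters $\lambda_1,\lambda_2$ equal to the multipliers at the marked fixed points $(0:1)$ and $(1:0)$. Unramifiedness gives $\lambda_1,\lambda_2\in K^\times$, and degree $2$ gives $\lambda_1\lambda_2\ne1$. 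Since $K$-isomorphism of triples changes neither $\langle\cdot\rangle$ nor good reduction at any place, it suffices to work with $\Phi_{\lambda_1,\lambda_2}:=(\phi_{\lambda_1,\lambda_2},(0:1),(1:0))$.

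\emph{Step 2 (the key lemma).} The core of the argument is the claim that $\Phi_{\lambda_1,\lambda_2}$ has good reduction at a non-Archimedean place $v$ if and only if $\lambda_1,\lambda_2,1-\lambda_1\lambda_2\in\Ocal_v^\times$. The ``if'' direction is straightforward: when $\lambda_1,\lambda_2\in\Ocal_v$ the coefficients of the homogeneous form of $\phi_{\lambda_1,\lambda_2}$ are $v$-integral with leading coefficients $1$, the identity $\Res(X^2+\lambda_1XY,\lambda_2XY+Y^2)=1-\lambda_1\lambda_2$ gives $\deg\tilde\phi_v=2$ as soon as $1-\lambda_1\lambda_2\in\Ocal_v^\times$, and the marked points reduce to the distinct fixed points $(0:1),(1:0)$ of $\tilde\phi_v$, with reduced multipliers $\widetilde{\lambda_1},\widetilde{\lambda_2}\ne0$, hence unramified. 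For the ``only if'' direction, let $\Psi=(\psi,Q_1,Q_2)$ be a $K$-isomorphic representative witnessing good reduction. Since $\tilde Q_1\ne\tilde Q_2$ in $\PP^1(\FF_v)$, the matrix whose columns are integral primitive representatives of $Q_2$ and $Q_1$ has unit determinant, so it gives an element $g\in\PGL_2(\Ocal_v)$ with $g(0:1)=Q_1$ and $g(1:0)=Q_2$; conjugating $\Psi$ by $g$ preserves all parts of the good-reduction condition and moves the marked points to $(0:1),(1:0)$. The resulting triple is $K$-isomorphic to $\Phi_{\lambda_1,\lambda_2}$ through an automorphism fixing $(0:1)$ and $(1:0)$, hence of the form $z\mapsto\mu z$, so by $(\ref{FPNF})$ it is the map $(\mu X^2+\lambda_1XY:\mu\lambda_2XY+Y^2)$ for some $\mu\in K^\times$. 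Scaling these four coefficients by some $c\in K^\times$ to be $v$-integral and primitive, I would read off in turn: $c,\mu\in\Ocal_v^\times$ (forced by $(0:1)$ and $(1:0)$ being fixed points of the reduction); then $\lambda_1,\lambda_2\in\Ocal_v$ (integrality of the remaining coefficients); then $1-\lambda_1\lambda_2\in\Ocal_v^\times$ (since, by $(\ref{ResChangeForm})$ and the resultant identity above, the resultant of the scaled coefficients is $c^4\mu^2(1-\lambda_1\lambda_2)$, which must be a $v$-unit); and finally $\lambda_1,\lambda_2\in\Ocal_v^\times$ (the reduced multipliers at the two fixed points are $\widetilde{\lambda_1}$ and $\widetilde{\lambda_2}$, nonzero by unramifiedness of the reduction). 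This bookkeeping, together with careful tracking of the conjugations and their effect on the marked-point structure, is where I expect the main technical work to lie.

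\emph{Step 3 (conclusion).} By Step~2, $\Phi_{\lambda_1,\lambda_2}$ has good reduction at every $v\in M_K\setminus S$ exactly when $\lambda_1,\lambda_2,1-\lambda_1\lambda_2\in\Ocal_S^\times$. Writing $a=\lambda_1\lambda_2$, this gives $a+(1-a)=1$ with $a,1-a\in\Ocal_S^\times$, so by the finiteness theorem for the $S$-unit equation $x+y=1$ there are only finitely many possible values of $a$. Fixing such an $a$ and putting $s=\lambda_1+\lambda_2$, the third multiplier is $\lambda_3=(2-s)/(1-a)$, and the Milnor coordinates of $\langle\Phi_{\lambda_1,\lambda_2}\rangle$ become
\begin{equation*}
\sigma_1=\frac{2-as}{1-a},\qquad \sigma_2=a+\frac{2s-s^2}{1-a},
\end{equation*}
an affine and a quadratic function of $s$, respectively; eliminating $s$ shows every such point lies on one plane curve $C_a\subset\AA^2\simeq\Mcal_2$. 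Therefore $\Gcal_{2,2}^{\uf}(K,S)$ is contained in the finite union $\bigcup_a C_a$ of curves, hence is not Zariski-dense in $\Mcal_2$. (The same strategy, with $0$ and $\infty$ marked as a $2$-cycle rather than as fixed points, should also give Theorem~\ref{ZariskiNonDenseThmCycle}.)
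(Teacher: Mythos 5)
Your argument is correct, and at its core it is the same proof as the paper's: normalize so that the two marked fixed points sit at $(0:1)$ and $(1:0)$, show that good reduction outside $S$ forces a solution of the $S$-unit equation $x+y=1$, and conclude that $\Gcal_{2,2}^{\uf}(K,S)$ lies in finitely many proper closed curves. Indeed, in the paper's normal form $(X^2+aXY:bXY+cY^2)$ the multipliers at $(0:1)$ and $(1:0)$ are $a/c$ and $b$, so the paper's invariant $u=ab/c$ is exactly your $\lambda_1\lambda_2$, and your unit equation is literally theirs. Two genuine differences are worth noting, both in your favor. First, because you phrase the local condition in terms of the multipliers $\lambda_1,\lambda_2$ at the marked points, which are $\PGL_2$-conjugation invariants of the triple, the statement ``good reduction at $v$ implies $\lambda_1,\lambda_2,1-\lambda_1\lambda_2\in\Ocal_v^\times$'' globalizes over $v\in M_K\setminus S$ for free; the paper instead has to glue the local representatives of Proposition~\ref{GoodRedLocalProp} into a single global one (Proposition~\ref{GoodRedGlobalProp}), which requires first enlarging $S$ so that $\Ocal_S$ is a principal ideal domain. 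Your route eliminates that step entirely. Second, where the paper establishes properness and closedness of the excluded loci abstractly (Lemma~\ref{ZarClosedLem}, via closedness of the quotient map $\Rat_2\to\Mcal_2$ and pairwise disjointness of the $V_u$), you exhibit each locus explicitly in Milnor coordinates as the parabola obtained by eliminating $s$ from $\sigma_1=(2-as)/(1-a)$, $\sigma_2=a+(2s-s^2)/(1-a)$; this uses $a\neq 0,1$, which holds since $a$ and $1-a$ are $S$-units, so the elimination is legitimate and each $C_a$ is a proper curve. The only blemish is cosmetic: you cite $(\ref{ResChangeForm})$ for the effect of scaling the coefficients on the resultant, whereas that formula concerns conjugation; the identity you actually need, $\Res(cA,cB)=c^4\Res(A,B)$ together with $\Res(\mu X^2+\lambda_1XY,\mu\lambda_2XY+Y^2)=\mu^2(1-\lambda_1\lambda_2)$, is immediate from the determinant formula, so nothing is at stake.
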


We need three preliminary propositions before we can give the proof of Theorem~\ref{ZariskiNonDenseThmFixed}.  The first states that maps in $\Rat_{2,2}^{\uf}(K)$ having good reduction at a non-Archimedean place $v$ of $K$ can be represented (up to $K$-isomorphism) in a certain simple form. 

\begin{prop}\label{GoodRedLocalProp}
Suppose that $\Phi\in\Rat_{2,2}^{\uf}(K)$ has good reduction at a non-Archimedean place $v$ of $K$.  Then $\Phi$ is $K$-isomorphic to $\Psi=(\psi,(1:0),(0:1))$ for some quadratic rational map $\psi:\PP^1\to\PP^1$ given by
\begin{equation}\label{GoodRedForm}
\psi(X:Y)=(X^2+aXY:bXY+cY^2)
\end{equation}
for $a, b, c\in\Ocal_v^\times$ such that 
\begin{equation*}
\Res(X^2+aXY,bXY+cY^2))=c(c-ab)\in\Ocal_v^\times.
\end{equation*} 
\end{prop}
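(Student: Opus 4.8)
The strategy is to use the definition of good reduction to replace $\Phi$ by an explicit $K$-isomorphic triple whose two marked fixed points are $(1:0)$ and $(0:1)$, and then to read off the shape of its underlying map and the required unit conditions directly from the reduction hypotheses. Almost all of the work is bookkeeping; the one place requiring care is converting the three reduction conditions — the marked points are fixed, they are unramified, and the reduced map has degree $2$ — into valuation statements about the coefficients of a normalized representative.

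\emph{Step 1 (normalizing the marked points).} By hypothesis $\Phi$ is $K$-isomorphic to some $\Psi=(\psi,Q_1,Q_2)$ in $\Rat_{2,2}^{\uf}(K)$ with $\deg(\tilde\psi_v)=2$ and with $\tilde Q_1,\tilde Q_2$ distinct unramified fixed points of $\tilde\psi_v$. Choose $v$-integral homogeneous representatives of $Q_1$ and $Q_2$, each with a coordinate in $\Ocal_v^\times$, and let $g\in\PGL_2(K)$ be the automorphism whose matrix has these as its two columns, so that $g(1:0)=Q_1$ and $g(0:1)=Q_2$. The hypothesis $\tilde Q_1\neq\tilde Q_2$ forces $\det g\in\Ocal_v^\times$ (in particular $g$ is invertible), so $g$ and $g^{-1}$ have $v$-integral coefficients, reduction commutes with conjugation by $g$, and by $(\ref{ResChangeForm})$ the map $\psi^g$ again has a $v$-integral representative of unit resultant. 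Hence $\Phi$ is $K$-isomorphic to $\Psi^g=(\psi^g,(1:0),(0:1))$, whose reduction $\widetilde{\psi^g}_v=(\tilde\psi_v)^{\tilde g}$ has degree $2$ and has $(1:0),(0:1)$ as distinct unramified fixed points. Replacing $\Phi$ by $\Psi^g$, we may assume $\Phi=(\psi,(1:0),(0:1))$, where $\psi$ is given by a $v$-integral representative of unit resultant and $(1:0),(0:1)$ are unramified fixed points of both $\psi$ and $\tilde\psi_v$.

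\emph{Step 2 (the shape of $\psi$).} Writing $\psi(X:Y)=(a_0X^2+a_1XY+a_2Y^2:b_0X^2+b_1XY+b_2Y^2)$ with all $a_i,b_i\in\Ocal_v$ and at least one in $\Ocal_v^\times$, the condition $\psi(1:0)=(1:0)$ forces $b_0=0$ and $a_0\neq0$, and $\psi(0:1)=(0:1)$ forces $a_2=0$ and $b_2\neq0$. Thus $\psi(X:Y)=(a_0X^2+a_1XY:b_1XY+b_2Y^2)$, and expanding the $4\times4$ resultant determinant gives $\Res(a_0X^2+a_1XY,b_1XY+b_2Y^2)=a_0b_2(a_0b_2-a_1b_1)$. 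Dividing all four coefficients by $a_0$ puts $\psi$ in the form $(\ref{GoodRedForm})$ with $a=a_1/a_0$, $b=b_1/a_0$, $c=b_2/a_0$, for which $\Res(X^2+aXY,bXY+cY^2)=c(c-ab)=b_2(a_0b_2-a_1b_1)/a_0^3$. It therefore suffices to prove $a_0,a_1,b_1,b_2\in\Ocal_v^\times$ and $a_0b_2-a_1b_1\in\Ocal_v^\times$.

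\emph{Step 3 (reading off the unit conditions).} Since the representative of $\psi$ above is already $v$-integral with a unit coefficient, $\tilde\psi_v=(\tilde a_0X^2+\tilde a_1XY:\tilde b_1XY+\tilde b_2Y^2)$, and by Step 1 the points $(1:0),(0:1)$ are unramified fixed points of $\tilde\psi_v$. From $\tilde\psi_v(1:0)=(\tilde a_0:0)=(1:0)$ and $\tilde\psi_v(0:1)=(0:\tilde b_2)=(0:1)$ we obtain $a_0,b_2\in\Ocal_v^\times$. Computing in the affine charts $z=Y/X$ near $(1:0)$ and $w=X/Y$ near $(0:1)$ shows the multipliers of $\tilde\psi_v$ at these fixed points are $\tilde b_1/\tilde a_0$ and $\tilde a_1/\tilde b_2$, and unramifiedness makes both nonzero, so $a_1,b_1\in\Ocal_v^\times$. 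Finally $\deg(\tilde\psi_v)=2$ gives $\Res(\tilde\psi_v)=\tilde a_0\tilde b_2(\tilde a_0\tilde b_2-\tilde a_1\tilde b_1)\neq0$, whence $a_0b_2-a_1b_1\in\Ocal_v^\times$. Combining, $a,b,c\in\Ocal_v^\times$ and $c(c-ab)=b_2(a_0b_2-a_1b_1)/a_0^3\in\Ocal_v^\times$, as required. The main obstacle, such as it is, lies in this step: one must apply the fixed-point, unramified, and degree conditions to $\tilde\psi_v$ rather than to $\psi$, using $\deg(\tilde\psi_v)=2$ to rule out indeterminacy of $\tilde\psi_v$ at the marked points so that each hypothesis becomes a clean statement about the valuation of a single coefficient.
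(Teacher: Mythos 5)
Your proposal is correct and follows essentially the same path as the paper's proof: conjugate by the $\GL_2(\Ocal_v)$ matrix whose columns are integral representatives of the two marked points (its determinant is a unit because the reductions are distinct), observe that the fixed-point conditions force the normal form with zero coefficients on $Y^2$ above and $X^2$ below, and then extract the unit conditions on the remaining coefficients from the unit resultant and the nonvanishing multipliers of the reduced map. The only cosmetic difference is that the paper deduces $a_0,b_2\in\Ocal_v^\times$ directly from the factorization of the unit resultant, whereas you read it off the reduced fixed-point condition together with $\deg(\tilde\psi_v)=2$; these are equivalent observations.
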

\begin{proof}
According to the definition of good reduction, possibly replacing $\Phi$ with some member of its $K$-isomorphism class, we may assume without loss of generality that $\Phi=(\phi,P_1,P_2)$, where $\deg(\tilde\phi_v)=2$ and where $\tilde{P}_1$ and $\tilde{P}_2$ are distinct unramified fixed points of the reduced map $\tilde\phi_v$.  

The fact that $\deg(\tilde\phi_v)=2$ means that we may write $\phi(X,Y)=(A(X,Y):B(X,Y))$ for forms $A(X,Y),B(X,Y)$ in $\Ocal_v[X,Y]$ with $\Res(A,B)\in\Ocal_v^\times$.  Set $P_1=(\alpha_1:\beta_1)$ for $\alpha_1,\beta_1\in\Ocal_v$ and at least one of the two $\alpha_1,\beta_1$ in the unit group $\Ocal_v^\times$, and set $P_2=(\alpha_2:\beta_2)$ subject to the same requirements.  Since $\tilde{P}_{1}\neq\tilde{P}_{2}$ in $\PP^1(\FF_v)$, we have $\tilde{\alpha}_{2}\tilde{\beta}_{1}-\tilde{\alpha}_{1}\tilde{\beta}_{2}\neq0$ in $\FF_v$.  In other words $\alpha_2\beta_1-\alpha_1\beta_2\in\Ocal_v^\times$, and therefore the matrix $\left(\begin{smallmatrix} \alpha_1&\alpha_2 \\ \beta_1& \beta_2  \end{smallmatrix}\right)$ is an element of $\GL_2(\Ocal_v)$.

Define $\psi=\phi^f$, where $f\in\PGL_2(K)$ is given by $f(X:Y)=(\alpha_1X+\alpha_2Y:\beta_1X+\beta_2Y)$.  This means that $\psi(X:Y)=(C(X,Y):D(X,Y))$ where the forms $C(X,Y)$ and $D(X,Y)$ are defined by
\begin{equation*}
(C,D) = \left(\begin{matrix} \alpha_1&\alpha_2 \\ \beta_1& \beta_2  \end{matrix}\right)^{-1}\circ(A,B)\circ\left(\begin{matrix} \alpha_1&\alpha_2 \\ \beta_1& \beta_2   \end{matrix}\right),
\end{equation*}
Since $\left(\begin{smallmatrix} \alpha_1&\alpha_2 \\ \beta_1& \beta_2  \end{smallmatrix}\right)\in\GL_2(\Ocal_v)$, the formula $(\ref{ResChangeForm})$ shows that the forms $C(X,Y)$ and $D(X,Y)$ have coefficients in $\Ocal_v$ and resultant $\Res(C,D)$ in $\Ocal_v^\times$.  In particular, $\deg(\tilde{\psi}_v)=2$.

Since $f(1:0)=P_1$ and $f(0:1)=P_2$, it follows that $(1:0)$ and $(0:1)$ are fixed points of $\psi$, and therefore we have $C(X,Y)=c_0X^2+c_1XY$ and $D(X,Y)=d_1XY+d_2Y^2$ for elements $c_0,c_1,d_1,d_2\in\Ocal_v$, with $\Res(C,D)=c_0d_2(c_0d_2-c_1d_1)\in\Ocal_v^\times$.  This immediately forces $c_0,d_2\in\Ocal_v^\times$, since otherwise $c_0$ or $d_2$ would be an element of the maximal ideal of $\Ocal_v$, making $c_0d_2(c_0d_2-c_1d_1)\in\Ocal_v^\times$ impossible.  

Since $\left(\begin{smallmatrix} \alpha_1&\alpha_2 \\ \beta_1& \beta_2  \end{smallmatrix}\right)\in\GL_2(\Ocal_v)$, the automorphism $f$ reduces to an automorphism $\tilde{f}\in\PGL_2(\FF_v)$, and $\tilde{\psi}_v=\tilde{\phi}_v^{\tilde{f}}$.  Since $\tilde{P}_1$ and $\tilde{P}_2$ are unramified fixed points of $\tilde\phi_v$, it follows that $(\tilde{1}:\tilde{0})$ and $(\tilde{0}:\tilde{1})$ are unramified fixed points of $\tilde{\psi}_v$.  Standard calculations show that, since $(\tilde{1}:\tilde{0})$ is an unramified point of $\tilde\psi_v$, $\tilde{d}_1$ is nonzero in $\FF_v$, and since $(\tilde{0}:\tilde{1})$ is an unramified point of $\tilde\psi_v$, $\tilde{c}_1$ is nonzero in $\FF_v$.  Consequently, both $c_1$ and $d_1$ are in $\Ocal_v^\times$.

Finally, setting $a=\frac{c_1}{c_0}$, $b=\frac{d_1}{c_0}$, and $c=\frac{d_2}{c_0}$, we obtain a representation for the map $\psi$ in the desired form $(\ref{GoodRedForm})$, with $a,b,c\in\Ocal_v^\times$ and $c(c-ab)=\frac{d_2}{c_0}(\frac{d_2}{c_0}-\frac{c_1}{c_0}\frac{d_1}{c_0})=c_0^{-3}d_2(c_0d_2-c_1d_1)\in\Ocal_v^\times$. 
\end{proof}

Given a map $\Phi$ in $\Rat_{2,2}^{\uf}(K)$ having good reduction at all places outside $S$, Proposition~\ref{GoodRedLocalProp} shows that for each $v\in M_K\setminus S$, $\Phi$ is $K$-isomorphic to some map $\Psi$ possessing a particularly simple form which realizes this good reduction at $v$.  A priori, the map $\Psi$ may vary from place to place, but the following proposition shows that, if $\Ocal_S$ is a principal ideal domain, then a global map $\Psi$ can be found satisfying the conclusion of Proposition~\ref{GoodRedLocalProp} at every place $v\in M_K\setminus S$.

\begin{prop}\label{GoodRedGlobalProp}
Assume that $\Ocal_S$ is a principal ideal domain.  Suppose that $\Phi\in\Rat_{2,2}^{\uf}(K)$ has good reduction at all places $v\in M_K\setminus S$.  Then $\Phi$ is $K$-isomorphic to $\Psi=(\psi,(1:0),(0:1))$ for some quadratic rational map $\psi:\PP^1\to\PP^1$ given by
\begin{equation*}
\psi(X:Y)=(X^2+aXY:bXY+cY^2)
\end{equation*}
for $a, b, c\in\Ocal_S^\times$ such that 
\begin{equation*}
\Res(X^2+aXY,bXY+cY^2))=c(c-ab)\in\Ocal_S^\times.
\end{equation*}
\end{prop}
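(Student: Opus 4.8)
The plan is to reduce the whole statement to the behavior of the two multipliers attached to the marked fixed points, which are $\PGL_2$-conjugation invariants, so that no genuine place-by-place patching is needed. Write $\lambda_1,\lambda_2\in K$ for the multipliers of $\phi$ at $P_1,P_2$; these lie in $K$ because $P_1,P_2\in\PP^1(K)$ and $\phi$ is defined over $K$, and they depend only on the $K$-isomorphism class of $\Phi=(\phi,P_1,P_2)$. A map $\psi(X:Y)=(X^2+aXY:bXY+cY^2)$ has $(1:0)$ and $(0:1)$ as fixed points with multipliers $b$ and $a/c$ respectively, and $\Res(X^2+aXY,bXY+cY^2)=c(c-ab)$; so it suffices to (i) prove $\lambda_1,\lambda_2\in\Ocal_S^\times$ and $1-\lambda_1\lambda_2\in\Ocal_S^\times$, and then (ii) exhibit a $K$-isomorphism carrying $\Phi$ to a triple $(\psi,(1:0),(0:1))$ with $\psi$ of the above shape and with $b=\lambda_1$, $a/c=\lambda_2$.

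For step (i) I would fix a place $v\in M_K\setminus S$ and apply Proposition~\ref{GoodRedLocalProp}: since $\Phi$ has good reduction at $v$, it is $K$-isomorphic to some $(\psi_v,(1:0),(0:1))$ with $\psi_v(X:Y)=(X^2+a_vXY:b_vXY+c_vY^2)$, where $a_v,b_v,c_v\in\Ocal_v^\times$ and $c_v(c_v-a_vb_v)\in\Ocal_v^\times$. This isomorphism carries $P_1$ to $(1:0)$ and $P_2$ to $(0:1)$, so by conjugation-invariance of multipliers $\lambda_1=b_v$, $\lambda_2=a_v/c_v$, and $1-\lambda_1\lambda_2=(c_v-a_vb_v)/c_v$ (note $c_v\in\Ocal_v^\times$ forces $c_v-a_vb_v\in\Ocal_v^\times$). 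Hence all three quantities lie in $\Ocal_v^\times$, and since this holds for every $v\notin S$ they lie in $\Ocal_S^\times$; in particular $1-\lambda_1\lambda_2\neq0$.

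For step (ii), after replacing $\Phi$ by a $K$-isomorphic triple I may assume $P_1=(1:0)$ and $P_2=(0:1)$ (the matrix with columns $P_1,P_2$ is the required element of $\PGL_2(K)$). Then $(1:0)$ and $(0:1)$ are unramified fixed points of $\phi$, which forces $\phi(X:Y)=(c_0X^2+c_1XY:d_1XY+d_2Y^2)$ with $c_0,c_1,d_1,d_2\in K^\times$: non-vanishing of the resultant gives $c_0,d_2\neq0$, and unramifiedness gives $c_1,d_1\neq0$. Rescaling the coefficient vector by $c_0^{-1}$ represents $\phi$ as $(X^2+aXY:bXY+cY^2)$ with $a=c_1/c_0$, $b=d_1/c_0$, $c=d_2/c_0$ in $K^\times$, and a short multiplier computation gives $b=\lambda_1$, $a/c=\lambda_2$. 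A final conjugation by the diagonal automorphism $g(X:Y)=(cX:Y)$, which fixes $(1:0)$ and $(0:1)$, produces the representation $(X^2+(a/c)XY:bXY+Y^2)=(X^2+\lambda_2XY:\lambda_1XY+Y^2)$. Taking $\psi$ to be this map and $\Psi=(\psi,(1:0),(0:1))$, we have that $\Phi$ is $K$-isomorphic to $\Psi$, and $\psi$ is in the required form: $a=\lambda_2$, $b=\lambda_1$, $c=1$ all lie in $\Ocal_S^\times$ by step (i), and $\Res(X^2+aXY,bXY+cY^2)=c(c-ab)=1-\lambda_1\lambda_2\in\Ocal_S^\times$.

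The step I expect to be the crux is the globalization — precisely the worry stated just before the proposition, that the normal forms supplied by Proposition~\ref{GoodRedLocalProp} a priori differ from place to place. Reading those forms through the $\PGL_2$-invariants $\lambda_1,\lambda_2$ makes this worry evaporate: step (i) becomes an immediate consequence and step (ii) is then a routine normal-form manipulation. If one instead preferred to patch the local models together directly, the hypothesis that $\Ocal_S$ is a principal ideal domain would be the natural tool for producing a single $\GL_2(K)$-scaling valid simultaneously at all places.
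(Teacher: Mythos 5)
Your proof is correct, but it takes a genuinely different route from the paper's. The paper argues by direct patching: it normalizes $P_1=(1:0)$, $P_2=(0:1)$, observes that the local isomorphisms supplied by Proposition~\ref{GoodRedLocalProp} must be diagonal, $f_v(X:Y)=(\alpha_v X:Y)$, and then invokes the hypothesis that $\Ocal_S$ is a principal ideal domain to produce a single $\alpha\in K^\times$ with $|\alpha|_v=|\alpha_v|_v$ for all $v\notin S$, yielding global coefficients $a=\alpha^{-1}a_0$, $b=b_0$, $c=\alpha^{-1}c_0$ in $\Ocal_S^\times$. You instead observe that the residual ambiguity in the normal form is exactly this diagonal torus action $(a,b,c)\mapsto(\alpha^{-1}a,\,b,\,\alpha^{-1}c)$, whose invariants $b$ and $a/c$ are precisely the multipliers $\lambda_1,\lambda_2$ at the marked fixed points; since multipliers are $\PGL_2$-conjugation invariants, the local normal forms immediately give $\lambda_1,\lambda_2,1-\lambda_1\lambda_2\in\Ocal_v^\times$ for every $v\notin S$, hence in $\Ocal_S^\times$, and the normalization $c=1$ (i.e.\ the fixed-point normal form $(X^2+\lambda_2XY:\lambda_1XY+Y^2)$) then satisfies all the required conditions. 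Your computations check out (the multiplier of $(X^2+aXY:bXY+cY^2)$ at $(1:0)$ is $b$ and at $(0:1)$ is $a/c$, and $\Res=c(c-ab)$), and the resulting triple still feeds correctly into the unit-equation argument of Theorem~\ref{ZariskiNonDenseThmFixed}, since $\frac{ab}{c}=\lambda_1\lambda_2$ and $\frac{c-ab}{c}=1-\lambda_1\lambda_2$. What your approach buys is that the principal-ideal-domain hypothesis becomes superfluous (so the enlargement of $S$ at the start of the theorem's proof is also unnecessary); what the paper's approach buys is independence from multiplier computations and a patching template that could be reused in settings where no convenient complete system of invariants for the residual torus action is available.
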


\begin{proof}
Replacing $\Phi=(\phi,P_1,P_2)$ with its conjugate by a suitable automorphism in $\PGL_2(K)$ which takes $(1:0)$ to $P_1$ and $(0:1)$ to $P_2$, we may assume without loss of generality that 
$\Phi=(\phi,(1:0),(0:1))$ for some map $\phi$ given by
\begin{equation*}
\phi(X:Y)=(X^2+a_0XY:b_0XY+c_0Y^2)
\end{equation*}
for coefficients $a_0,b_0,c_0\in K$.

Fix a place $v\in M_K\setminus S$.  Since $\Phi$ has good reduction at $v$, it follows from Proposition~\ref{GoodRedLocalProp} that $\Phi$ is $K$-isomorphic to some map $\Psi_v=(\psi_v,(1:0),(0:1))$, where 
\begin{equation}\label{phiv1}
\psi_v(X:Y)=(X^2+a_vXY:b_vXY+c_vY^2)
\end{equation}
for $a_v, b_v, c_v\in\Ocal_v^\times$ such that
\begin{equation*}
\Res(X^2+a_vXY,b_vXY+c_vY^2))=c_v(c_v-a_vb_v)\in\Ocal_v^\times.
\end{equation*} 
Let $f_v\in\PGL_2(K)$ be the automorphism such that $\Psi_v=\Phi^{f_v}$.  Since both $\psi_v$ and $\phi$ fix the points $(1:0)$ and $(0:1)$, the automorphism $f_v$ must fix these points as well, so we may write $f_v(X,Y)=(\alpha_v X:Y)$ for some $\alpha_v\in K^\times$.  Conjugating $\phi$ by $f_v$ we obtain
\begin{equation}\label{phiv2}
\phi^{f_v}(X:Y)=(X^2+\alpha_v^{-1}a_0XY:b_0XY+\alpha_v^{-1}c_0Y^2).
\end{equation}
Since $\psi_v=\phi^{f_v}$, comparing $(\ref{phiv1})$ and $(\ref{phiv2})$ we obtain the three identities
\begin{equation*}
\begin{split}
a_v & = \alpha_v^{-1}a_0 \\ 
b_v & = b_0 \\
c_v & = \alpha_v^{-1}c_0.
\end{split}
\end{equation*}

Since $\mathcal{O}_S$ is a principal ideal domain, there exists $\alpha\in K^\times$ such that $|\alpha|_v=|\alpha_v|_v$ for each $v\in M_K\setminus S$.  Define $\Psi=(\psi,(1:0),(0:1))$ for
\begin{equation*}
\psi(X:Y)=(X^2+aXY:bXY+cY^2)
\end{equation*}
with coefficients given by
\begin{equation*}
\begin{split}
a & = \alpha^{-1}a_0 \\ 
b & = b_0 \\
c & = \alpha^{-1}c_0.
\end{split}
\end{equation*}
Then $\Psi=\Phi^{f}$ for the automorphism $f\in\PGL_2(K)$ defined by $f(X:Y)=(\alpha X:Y)$.  Furthermore, for each place $v\in M_K\setminus S$ we have $|a|_v=|\alpha^{-1}a_0|_v=|\alpha_v^{-1}a_0|_v=|a_v|_v=1$, whereby $a\in\Ocal_v^\times$; similar calculations show that $b$, $c$, and $c(c-ab)$ are all elements of the unit group  $\Ocal_v^\times$ as well.  Since these elements are in $\Ocal_v^\times$ for all $v\in M_K\setminus S$, we have $a,b, c, c(c-ab)\in\Ocal_S^\times$.
\end{proof}

\begin{lem}\label{ZarClosedLem}
Given $u\in\Kbar$, define $V_u$ to be the set of all $\langle\phi\rangle$ in $\Mcal_2$ for $\phi\in\Rat_2$ of the form 
\begin{equation}\label{NormalForm}
\phi(X:Y)=(X^2+aXY:bXY+cY^2)
\end{equation}
with $\frac{ab}{c}=u$.  Then $V_u$ is a proper Zariski-closed subset of $\Mcal_2$.
\end{lem}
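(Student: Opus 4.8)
The plan is to compute the Milnor invariants $\sigma_1,\sigma_2$ of a map $\phi$ in the normal form $(\ref{NormalForm})$ explicitly as rational functions of $a,b,c$, and then show that imposing the single relation $ab=uc$ forces the point $(\sigma_1(\phi),\sigma_2(\phi))$ to lie on a fixed proper algebraic subset of $\AA^2\simeq\Mcal_2$. First I would locate the fixed points of $\phi(X:Y)=(X^2+aXY:bXY+cY^2)$: the points $(1:0)$ and $(0:1)$ are visibly fixed, and the third fixed point is the solution of a linear equation coming from $X(X+aY)=Y(bX+cY)$ after dividing out the obvious factors, so all three fixed points are rational in $a,b,c$. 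Then I would compute the three multipliers $\lambda_1,\lambda_2,\lambda_3$ by differentiating in a suitable affine chart (the dehomogenization at each fixed point), obtaining each $\lambda_j$ as an explicit rational function of $a,b,c$, homogeneous of degree $0$ under the scaling $\phi^f$ with $f(X:Y)=(\alpha X:Y)$, i.e. under $(a,b,c)\mapsto(\alpha^{-1}a, b, \alpha^{-1}c)$ — indeed this scaling is exactly the stabilizer of the pair of marked points, which is consistent with Proposition~\ref{GoodRedGlobalProp}.

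The key structural observation is that the combination $u = \frac{ab}{c}$ is itself invariant under the scaling $(a,b,c)\mapsto(\alpha^{-1}a,b,\alpha^{-1}c)$, and in fact the pair $(u, b)$ — or some similar pair of invariants — parametrizes the relevant family; but $\sigma_1$ and $\sigma_2$ only see the $\PGL_2$-conjugacy class, so they must be expressible in terms of fewer parameters than $(a,b,c)$. Concretely I expect to find, after the computation, that $\sigma_1$ and $\sigma_2$ are rational functions of the two quantities $u=\frac{ab}{c}$ and $t=\frac{b}{c}$ (say), and that on the locus $ab=uc$ with $u$ fixed, the image point $(\sigma_1,\sigma_2)$ traces out at most a curve in $\AA^2$ as $t$ varies. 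Hence $V_u$, being contained in the Zariski closure of a one-parameter family of points, is contained in a proper closed subset — either a finite union of curves and points, or, if the family is constant, a single point. Either way $V_u\subsetneq\Mcal_2$, since $\Mcal_2$ is an irreducible surface.

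To make the last step clean I would argue: the map $(a,b,c)\mapsto\langle\phi\rangle$ restricted to the hypersurface $\{ab=uc\}\subset\AA^3$ factors through the quotient by the one-dimensional scaling action, hence through a variety of dimension at most $3 - 1 - 1 = 1$; therefore its image $V_u$ has dimension at most $1 < 2 = \dim\Mcal_2$, so $V_u$ is not Zariski-dense, and its closure is a proper closed subset. Finally $V_u$ itself is Zariski-closed: it is the image of the closed set $\{ab=uc,\ c(c-ab)\ne 0\}$ (the condition ensuring $\phi\in\Rat_2$) under the morphism $(\ref{M2QuotientMap})$; since this morphism is a geometric quotient with closed fibers, and since the scaling orbits are exactly the fibers over the image (each orbit being a single $\GG_m$-orbit, which is closed in the locus where $c\ne 0$), a standard GIT argument shows the image is closed. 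The main obstacle I anticipate is purely computational bookkeeping: getting the third fixed point and its multiplier $\lambda_3$ right, and correctly identifying which two rational combinations of $a,b,c$ the invariants $\sigma_1,\sigma_2$ depend on — once that dependence is pinned down, the dimension count is immediate. An alternative to the explicit multiplier computation, which sidesteps the worst of the algebra, is simply the abstract dimension count in the previous sentence together with the closedness argument; that already proves the lemma, and the explicit formulas are only needed if one wants to exhibit $V_u$ concretely.
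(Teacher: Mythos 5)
Your proposal is correct, and its core argument for properness is genuinely different from the paper's. The paper proves properness via the stronger claim that $V_u\cap V_{u'}=\emptyset$ for $u\neq u'$: it argues that an $f\in\PGL_2$ conjugating one normal-form map to another must fix $(1:0)$ and $(0:1)$, hence is a scaling $(X:Y)\mapsto(\alpha X:Y)$, under which $\frac{ab}{c}$ is invariant. You instead count dimensions: the normal-form locus $W_u=\{ab=uc\}$ is a surface, the scaling action on it has one-dimensional orbits (since $c\neq 0$ there), every fiber of $W_u\to V_u$ contains such an orbit, so $\dim V_u\leq 1<2$ and $V_u$ cannot be dense. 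Your route is not only valid but more robust, because it only needs the fibers to \emph{contain} the scaling orbits. The paper's disjointness argument --- and your own parenthetical claim that ``the scaling orbits are exactly the fibers over the image'' --- actually overreaches: a conjugating $f$ need only permute the full fixed-point set of $\phi$, not fix the two marked points, and since $\frac{ab}{c}$ equals the product of the multipliers at $(1:0)$ and $(0:1)$, a generic class $\langle\phi\rangle$ lies in three distinct sets $V_u$ (one for each pair of fixed points moved into marked position), so disjointness fails; your dimension count sidesteps this entirely. For closedness you do essentially what the paper does (image of the closed subset $W_u$ of $\Rat_2$ under the quotient map), asserted at about the same level of rigor; and in any case the application in Theorem~\ref{ZariskiNonDenseThmFixed} only needs each $V_u$ to lie in a proper Zariski-closed subset, which your dimension bound already supplies, so the explicit computation of $\sigma_1,\sigma_2$ you sketch as a first plan is indeed dispensable.
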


\begin{proof}
Define $W_u$ to be the set of maps $\phi\in\Rat_2$ of the form $(\ref{NormalForm})$ with $\frac{ab}{c}=u$.  In the notation of $\S$\ref{QuadRatMapsSect} and $\S$\ref{ModuliSpaceSect}, $W_u$ is the intersection of the three hypersurfaces $\{a_2=0\}$, $\{b_0=0\}$, and $\{a_1b_1=ua_0b_2\}$ in $\Rat_2$, and thus $W_u$ is Zariski-closed in $\Rat_2$.  Since $V_u$ is the image of $W_u$ under the closed quotient map $\langle\cdot\rangle:\Rat_2\to\Mcal_2$, it follows that $V_u$ is Zariski-closed in $\Mcal_2$.

It remains to show that $V_u$ is a proper subset of $\Mcal_2$; we will prove the stronger statement that $V_u$ and $V_{u'}$ are disjoint whenever $u\neq u'$.  For if $V_u\cap V_{u'}$ is nonempty, then there exist maps $\phi\in W_{u}$ and $\phi'\in W_{u'}$ which are $\PGL_2$-conjugate to one another; say $\phi'=\phi^f$ for $f\in\PGL_2$.  In the obvious notation we therefore have $\frac{ab}{c}=u$ and $\frac{a'b'}{c'}=u'$.  Since both $\phi$ and $\phi'$ fix both $(0:1)$ and $(1:0)$, the automorphism $f$ must fix these points as well, whereby $f(X:Y)=(\alpha X:Y)$ for some $\alpha\in \Kbar$.  But for maps of the form $(\ref{NormalForm})$, the quantity $\frac{ab}{c}$ is invariant under the action of automorphisms of this form, because conjugating $\phi$ by $f$ we have 
\begin{equation*}
\phi^f(X:Y)=(X^2+\alpha^{-1}aXY:bXY+\alpha^{-1}cY^2)
\end{equation*}
and $\frac{(\alpha^{-1}a)b}{\alpha^{-1} c}=\frac{ab}{c}$.  It follows that $u=u'$.  This completes the verification that $V_u\cap V_{u'}=\emptyset$ whenever $u\neq u'$, and therefore each $V_u$ is a proper subset of $\Mcal_2$. 
\end{proof}

\begin{proof}[Proof of Theorem~\ref{ZariskiNonDenseThmFixed}]
As englarging $S$ proves a stronger statement, first let us increase the size of $S$ so that $\mathcal{O}_S$ is a principal ideal domain.

Each point in $\Gcal_{2,2}^{\uf}(K,S)$ is $\langle\Phi\rangle$ for some $\Phi\in\Rat_{2,2}^\uf(K)$ with good reduction at all places $v\in M_K\setminus S$, and according to Proposition~\ref{GoodRedGlobalProp} we may assume without loss of generality that each such map takes the form $\Phi=(\phi,(1:0),(0:1))$ where
\begin{equation*}
\phi(X:Y)=(X^2+aXY:bXY+cY^2)
\end{equation*}
for $a, b, c, c(c-ab)\in\Ocal_S^\times$.  It follows that $(\frac{c-ab}{c}, \frac{ab}{c})$ is a solution in $(\Ocal_S^\times)^2$ to the unit equation $x+y=1$.  Since there are only finitely many such solutions (\cite{bombierigubler} $\S$5.1), there exists a finite list of units $u_1,\dots,u_r\in\Ocal_S^\times$ such that 
\begin{equation}\label{ShowsNotZD}
\Gcal_{2,2}^{\uf}(K,S)\subseteq V_{u_1}\cup\dots\cup V_{u_r},
\end{equation}  
where $V_u$ denotes the the set of all $\langle\phi\rangle$ in $\Mcal_2$ for $\phi\in\Rat_2$ of the form 
$\phi(X:Y)=(X^2+aXY:bXY+cY^2)$ with $\frac{ab}{c}=u$.  

By Lemma~\ref{ZarClosedLem}, each $V_u$ is a proper Zariski-closed subset of $\Mcal_2$, and therefore $(\ref{ShowsNotZD})$ shows that $\Gcal_{2,2}^{\uf}(K,S)$ is not Zariski-dense in $\Mcal_2$.
\end{proof}

\begin{rem}
Since a generic quadratic rational map in $\Rat_2$ has three distinct unramified fixed points over $\Kbar$, it would be reasonable to ask why we consider rational maps with double (rather than single or triple) unramified fixed-point structure.

First, Theorem~\ref{ZariskiNonDenseThmFixed} would be false in general if double unramified fixed-point structure were replaced by {\em single} unramified fixed-point structure, and a counterexample is given by the same family occuring in the second proof of Theorem~\ref{ZariskiDenseThm}.  Recall that $\Vcal$ is the set of all $\langle\phi\rangle$ in $\Mcal_2$ for rational maps $\phi\in\Rat_2(K)$ taking the form $\phi(X:Y)=(X^2+\alpha XY:(\frac{1-\beta}{\alpha})XY+Y^2)$ for $S$-units $\alpha,\beta\in\Ocal_S^\times$.  At all places $v\in M_K\setminus S$, the point $(0:1)$ reduces to an unramified fixed point of $\tilde\phi_v$, but assuming that $\Ocal_S^\times$ is infinite, $\Vcal$ is Zariski-dense in $\Mcal_2$.

One might define the space $\Rat_{3,2}^\uf(K)$ of quadratic rational maps with {\em triple} unramified fixed-point structure over $K$; but the non-Zariski-density of the image in $\Mcal_2$ of the set of all such maps having prescribed good reduction would follow at once from Theorem~\ref{ZariskiNonDenseThmFixed}.  Indeed, the map $\Rat_{3,2}^\uf(K)\to\Mcal_2(K)$ factors through the map $\Rat_{3,2}^\uf(K)\to\Rat_{2,2}^\uf(K)$ which remembers the first two fixed points and forgets the third.  In an obvious extension of the notation of Theorem~\ref{ZariskiNonDenseThmFixed}, we therefore have $\Gcal_{3,2}^\uf(K,S)\subseteq\Gcal_{2,2}^\uf(K,S)$.
\end{rem}

\begin{rem}
Using geometric invariant theory, one might define a new moduli space $\Mcal_{2,2}^{\uf}=\Rat_{2,2}^\uf/\PGL_2$ as the quotient of the space $\Rat_{2,2}^\uf$ of quadratic rational maps with double unramified fixed-point structure modulo the conjugation action of $\PGL_2$.  This space would be of interest on its own merits, but for our purposes there would be little to gain in such a construction, because a non-Zariski-density result in $\Mcal_{2,2}^{\uf}$ for maps with prescribed good reduction would follow trivially from Theorem~\ref{ZariskiNonDenseThmFixed}, using the dominant finite-to-one map $\Mcal_{2,2}^{\uf}\to\Mcal_{2}$ obtained from forgetting the double unramified fixed-point structure.
\end{rem}


\section{Prescribed good reduction for quadratic rational maps with unramified $2$-cycle structure}

With nearly the same proof, a variation on Theorem~\ref{ZariskiNonDenseThmFixed} can be established in which $2$-cycle structure is used in place of double fixed-point structure.

\begin{dfn}
Let $\Rat_{2,2}^{\uc}(K)$ be the set of all triples of the form $\Phi=(\phi,P_1,P_2)$, where $\phi:\PP^1\to\PP^1$ is a quadratic rational map defined over $K$, and where $P_1,P_2\in\PP^1(K)$ are distinct $K$-rational points which are not ramified points of $\phi$ and for which $\phi(P_1)=P_2$ and $\phi(P_2)=P_1$.  We call such a triple $\Phi$ a \emph{quadratic rational map with double unramified $2$-cycle structure over $K$}. 
\end{dfn}

\begin{dfns}
Two maps $\Phi$ and $\Psi$ in $\Rat_{2,2}^{\uc}(K)$ are {\em $K$-isomorphic} if $\Psi=\Phi^f$ for some automorphism $f\in\PGL_2(K)$, where 
\begin{equation*}
\Phi^f=(\phi^f,f^{-1}(P_1),f^{-1}(P_2)).
\end{equation*}  
A map $\Phi$ in $\Rat_{2,2}^{\uc}(K)$ has {\em good reduction} at a non-Archimedean place $v$ of $K$ if it is $K$-isomorphic to some $\Psi=(\psi,Q_1,Q_2)$ in $\Rat_{2,2}^{\uc}(K)$ such that $\deg(\tilde\psi_v)=2$ and such that $\tilde{Q}_1$ and $\tilde{Q}_2$ are distinct points in $\PP^1(\FF_v)$ which are not ramified points of $\tilde\psi_v$ and for which $\tilde\psi_v(\tilde{Q}_1)=\tilde{Q}_2$ and $\tilde\psi_v(\tilde{Q}_2)=\tilde{Q}_1$.  
\end{dfns}

For each $\Phi=(\phi,P_1,P_2)$ in $\Rat_{2,2}^{\uc}(K)$, define $\langle\Phi\rangle=\langle\phi\rangle$.  Thus, as before, the map $\langle\cdot\rangle:\Rat_{2,2}^{\uc}(K)\to\Mcal_2(K)$ forgets the $2$-cycle structure of $\Phi$ and preserves only the $\PGL_2$-conjugacy class $\langle\phi\rangle$ of its underlying rational map.

\begin{thm}\label{ZariskiNonDenseThmCycle}
Let $\Gcal_{2,2}^{\uc}(K,S)$ be the set of all points $\langle\Phi\rangle$ in $\Mcal_2$ for maps $\Phi$ in $\Rat_{2,2}^{\uf}(K)$ having good reduction at all places $v\in M_K\setminus S$.  Then $\Gcal_{2,2}^\uf(K,S)$ is not Zariski-dense in $\Mcal_2$.
\end{thm}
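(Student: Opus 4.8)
The plan is to run, essentially verbatim, the three-step scheme used to prove Theorem~\ref{ZariskiNonDenseThmFixed} (a local normal form, then a global normal form obtained from a principal ideal domain, then a Zariski-closedness lemma combined with the $S$-unit equation), replacing throughout ``double unramified fixed-point structure'' by ``unramified $2$-cycle structure''; the only computations that actually change are those tied to the explicit normal form. As in the proof of Theorem~\ref{ZariskiNonDenseThmFixed}, I would first enlarge $S$ so that $\Ocal_S$ is a principal ideal domain, which only strengthens the assertion.

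The first and only substantially new step is the $2$-cycle analogue of Proposition~\ref{GoodRedLocalProp}: if $\Phi\in\Rat_{2,2}^{\uc}(K)$ has good reduction at a non-Archimedean place $v$, then $\Phi$ is $K$-isomorphic to $\Psi=(\psi,(1:0),(0:1))$ with
\begin{equation*}
\psi(X:Y)=(aXY+bY^2:X^2+cXY),
\end{equation*}
where $a,b,c\in\Ocal_v^\times$ and $\Res(aXY+bY^2,X^2+cXY)=b(b-ac)\in\Ocal_v^\times$. The argument follows the proof of Proposition~\ref{GoodRedLocalProp}: by the definition of good reduction one may assume $\Phi=(\phi,Q_1,Q_2)$ with $\deg(\tilde\phi_v)=2$ and with $\tilde Q_1,\tilde Q_2$ distinct unramified points of $\tilde\phi_v$ satisfying $\tilde\phi_v(\tilde Q_1)=\tilde Q_2$, $\tilde\phi_v(\tilde Q_2)=\tilde Q_1$; since $\tilde Q_1\neq\tilde Q_2$, the matrix sending $(1:0),(0:1)$ to $Q_1,Q_2$ lies in $\GL_2(\Ocal_v)$, so conjugating by it preserves integrality of the coefficient forms and the unit value of the resultant while carrying the $2$-cycle onto $\{(1:0),(0:1)\}$. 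The conditions $\psi(1:0)=(0:1)$ and $\psi(0:1)=(1:0)$ then force $\psi(X:Y)=(a_1XY+a_2Y^2:b_0X^2+b_1XY)$ with coefficients in $\Ocal_v$; the identity $\Res=a_2b_0(a_2b_0-a_1b_1)$ forces $a_2,b_0\in\Ocal_v^\times$; and a standard computation of the critical divisor of $\psi$, which is cut out by the binary form $a_1b_0X^2+2a_2b_0XY+a_2b_1Y^2$, shows that unramifiedness of $(1:0)$ and of $(0:1)$ for $\tilde\psi_v$ forces $\tilde a_1\neq0$ and $\tilde b_1\neq0$, i.e.\ $a_1,b_1\in\Ocal_v^\times$. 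Dividing the coefficient forms through by $b_0$ puts $\psi$ in the displayed shape with $a=a_1/b_0$, $b=a_2/b_0$, $c=b_1/b_0$.

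With this in hand, the $2$-cycle analogue of Proposition~\ref{GoodRedGlobalProp} is proved exactly as there: after normalizing $\Phi$ so that its underlying map already has $\{(1:0),(0:1)\}$ as a $2$-cycle, the automorphism relating $\Phi$ to each local model fixes $(1:0)$ and $(0:1)$, hence has the form $(X:Y)\mapsto(\alpha_v X:Y)$, and the principal ideal domain hypothesis produces a single $\alpha\in K^\times$ with $|\alpha|_v=|\alpha_v|_v$ for all $v\notin S$, yielding a global $\Psi=(\psi,(1:0),(0:1))$ of the above shape with $a,b,c,b(b-ac)\in\Ocal_S^\times$. Next comes the analogue of Lemma~\ref{ZarClosedLem}: for $u\in\Kbar$, the set $V_u^{\uc}$ of $\langle\phi\rangle$ for $\phi$ of the form $(aXY+bY^2:X^2+cXY)$ with $\frac{ac}{b}=u$ is a proper Zariski-closed subset of $\Mcal_2$; closedness follows as before since $V_u^{\uc}$ is the image of a Zariski-closed subset of $\Rat_2$ under the closed quotient map, and properness follows because $\frac{ac}{b}$ is precisely the multiplier of the $2$-cycle $\{(1:0),(0:1)\}$ of $\phi$, hence a conjugation invariant unaffected by interchanging the two marked points, so distinct values of $u$ give disjoint $V_u^{\uc}$ (and each cuts out at most a curve in the surface $\Mcal_2$). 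Finally, for a global $\Psi$ as produced above one has $\frac{b-ac}{b}+\frac{ac}{b}=1$ with both summands in $\Ocal_S^\times$; since the $S$-unit equation $x+y=1$ has only finitely many solutions, there are units $u_1,\dots,u_r$ with $\Gcal_{2,2}^{\uc}(K,S)\subseteq V_{u_1}^{\uc}\cup\dots\cup V_{u_r}^{\uc}$, which is not Zariski-dense in $\Mcal_2$.

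The main obstacle is really just the normal-form bookkeeping of the first step: one must check that reduction modulo $v$ preserves the $2$-cycle of the marked points, and---the one genuinely new computation---that the two unramifiedness conditions translate into $a_1,b_1\in\Ocal_v^\times$ via the correct critical-divisor formula; after that the argument is a transcription of the proof of Theorem~\ref{ZariskiNonDenseThmFixed}. (In fact the shape $(aXY+bY^2:X^2+cXY)$ still admits the conjugations $(X:Y)\mapsto(\alpha X:Y)$, under which $(a,b,c)\mapsto(\alpha^{-2}a,\alpha^{-3}b,\alpha^{-1}c)$, so one could instead normalize $c=1$ to obtain a rigid normal form and thereby bypass the principal ideal domain reduction entirely; I have kept the argument parallel to that of Theorem~\ref{ZariskiNonDenseThmFixed}.)
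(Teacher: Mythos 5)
Your proposal is correct and follows essentially the same route as the paper, which itself only sketches this theorem by transcribing the three-step argument of Theorem~\ref{ZariskiNonDenseThmFixed} onto the normal form $\phi(X:Y)=(aXY+bY^2:X^2+cXY)$ with resultant $b(b-ac)$ and invariant $\frac{ac}{b}$; your local/global normal-form computations and the critical-divisor check all match. Your observation that $\frac{ac}{b}$ is the multiplier of the (necessarily unique) $2$-cycle, making the disjointness of the sets $V_u^{\uc}$ immediate, is a nice conceptual shortcut not spelled out in the paper, but it does not change the structure of the argument.
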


\begin{proof}
This proof follows precisely the same strategy as that of Theorem~\ref{ZariskiNonDenseThmFixed}, and so we only give a sketch to highlight where this proof differs from the previous one.  

Again, without loss of generality we may enlarge $S$ so that $\mathcal{O}_S$ is a principal ideal domain.  Each point in $\Gcal_{2,2}^{\uc}(K,S)$ is $\langle\Phi\rangle$ for some $\Phi\in\Rat_{2,2}^\uc(K)$ with good reduction at all places $v\in M_K\setminus S$, and in a similar fashion as in Proposition~\ref{GoodRedGlobalProp}, it may be shown that each such map (up to $K$-isomorphism) takes the form $\Phi=(\phi,(1:0),(0:1))$ where
\begin{equation*}
\phi(X:Y)=(aXY+bY^2:X^2+cXY)
\end{equation*}
for $a, b, c, b(b-ac)\in\Ocal_S^\times$.  It follows that $(\frac{b-ac}{b}, \frac{ac}{b})$ is a solution in $(\Ocal_S^\times)^2$ to the unit equation $x+y=1$.  Since there are only finitely many such solutions (\cite{bombierigubler} $\S$5.1), there exists a finite list of units $u_1,\dots,u_r\in\Ocal_S^\times$ such that 
\begin{equation*}
\Gcal_{2,2}^{\uf}(K,S)\subseteq Z_{u_1}\cup\dots\cup Z_{u_r},
\end{equation*}  
where $Z_u$ denotes the the set of all $\langle\phi\rangle$ in $\Mcal_2$ for $\phi\in\Rat_2$ of the form 
$\phi(X:Y)=(aXY+bY^2:X^2+cXY)$ with $\frac{ac}{b}=u$, and each $Z_u$ is a proper Zariski-closed subset of $\Mcal_2$.
\end{proof}


\medskip

\bibliographystyle{acm}

\medskip

\end{document}